\newenvironment{mcases}[1][l]
 {\let\@ifnextchar\new@ifnextchar
  \left\lbrace
  \array{@{}l@{\quad}#1@{}}}
 {\endarray\right.}
\newtheorem{theorem}{Theorem}[section]
\newtheorem{definition}{Definition}[section]
\newtheorem{prop}[theorem]{Proposition}
\theoremstyle{definition}
\def\th@remark{%
  \thm@headfont{\bfseries}%
  \normalfont 
  \thm@preskip\topsep \divide\thm@preskip\tw@
  \thm@postskip\thm@preskip
}
\theoremstyle{remark}
\newtheorem{lemma}[theorem]{Lemma}
\newenvironment{customlemma}[1]
  {\innercustomlemma}
  {\endinnercustomlemma}
\newenvironment{customtheorem}[1]
  {\innercustomthm}
  {\endinnercustomthm}
\newcommand{\Hilbert}{\mathbb{H}}
\newcommand{\half}{(-\overline{\Delta})^\frac{1}{2}}
\newcommand{\grad}{\overline{\nabla}}
\newcommand{\lap}{\overline{\Delta}}
\newcommand{\dnu}{\partial_\nu \Psi}
\newcommand{\lambdan}{\sqrt{\lambda_n}}
\def\Xint#1{\mathchoice
{\XXint\displaystyle\textstyle{#1}}%
{\XXint\textstyle\scriptstyle{#1}}%
{\XXint\scriptstyle\scriptscriptstyle{#1}}%
{\XXint\scriptscriptstyle\scriptscriptstyle{#1}}%
\!\int}
\def\XXint#1#2#3{{\setbox0=\hbox{$#1{#2#3}{\int}$ }
\vcenter{\hbox{$#2#3$ }}\kern-.6\wd0}}
\def\dashint{\Xint-}
\newcommand{\supp}{\operatorname{supp}}
\newcommand{\riesz}{\mathcal{R}}
\title{Classical Solutions for the 3D Quasi-Geostrophic System on a Bounded Domain}
\author[Novack]{Matthew D. Novack}
\author[Vasseur]{Alexis F. Vasseur}
\address[Matthew D. Novack]{\newline Department of Mathematics, \newline The University of Texas at Austin, Austin, TX 78712, USA}
\email{mnovack@math.utexas.edu}
\address[Alexis F. Vasseur]{\newline Department of Mathematics, \newline The University of Texas at Austin, Austin, TX 78712, USA}
\email{vasseur@math.utexas.edu}
\date{\today}
\thanks{\textbf{Acknowledgment}: The second author was partially funded by the NSF during this work}
\subjclass[2010]{76B03,35Q35} \keywords{Quasi-geostrophic equation, classical solution, bounded domains}
\begin{document}
\begin{abstract}
We revisit a model for three-dimensional, inviscid quasi-geostrophic flow on bounded, cylindrical domains introduced by the authors in \cite{nv18}.  We prove the local-in-time existence of classical solutions.
\end{abstract}
\maketitle \centerline{\date}
\section{Introduction}
The quasi-geostrophic system is a set of equations used to describe oceanic and atmospheric motion over large time-scales.  Much of the existing literature treats the case of a physical boundary at the (top and) bottom of the domain while specifying that the horizontal variables $(x,y)$ belong to either $\Omega = \mathbb{R}^2, \mathbb{T}^2$. In these cases, the equations take the form
\[\begin{dcases}
        \left( \partial_t + \grad^\perp \Psi\cdot \grad \right) \left( \mathcal{L}(\Psi) + \beta_0 y \right)= 0 & \Omega\times[0,h]\times[0,T] \\
        \left( \partial_t + \grad^\perp \Psi \cdot \grad \right) (\partial_{\nu}\Psi) = 0 & \Omega\times\{0,h\}\times[0,T].\\
       \end{dcases}   \qquad (QG)
\]
The normal derivative of $\Psi$ on $\Omega\times\{0,h\}$ is denoted by $\dnu$, while $\grad = (\partial_x, \partial_y, 0)$, and $\grad^\perp = (-\partial_y, \partial_x, 0)$. The operator $\mathcal{L}$ is defined by
$$ \mathcal{L}:=\partial_{xx}+\partial_{yy}+ \partial_z \left( \lambda \partial_z \right) $$
where $\lambda>0$ is a smooth function depending only on $z$ and is related to the density of the fluid. To ensure ellipticity of $\mathcal{L}$ one requires that $$\frac{1}{\Lambda} \leq \lambda(z) \leq \Lambda$$ for some $\Lambda \in (0,\infty)$.

In a recent work \cite{nv18}, the fully three-dimensional system was considered on a domain with non-trivial lateral boundary conditions for the first time.  Using only the assumption that the fluid velocity $\grad^\perp\Psi$ does not penetrate the boundary $\partial\Omega\times[0,h]$ (i.e. $\grad^\perp\Psi\cdot \nu_s = 0$), the following model was derived:
\[ 
\begin{dcases}
       \left( \partial_t + \grad^\perp\Psi\cdot\grad \right) \left( \mathcal{L}(\Psi) + \beta_0 y \right) =0 &  \Omega\times(0,h)\times[0,T] \\
       \left( \partial_t + \grad^\perp\Psi\cdot\grad \right) \partial_\nu \Psi =0 &  \Omega\times\{0,h\}\times[0,T]\\
       \Psi(x,y,z,t) = \Psi(z,t) & \partial\Omega\times[0,h]\times[0,T] \\
       \frac{\partial}{\partial t}\dashint_{\partial\Omega\times\{z\}} \grad \Psi(z,t) \cdot \nu_s = 0 & [0,h]\times[0,T].
       \end{dcases}
\]
The quantity
\begin{equation}\label{jofz}
\dashint_{\partial\Omega\times\{z\}} \grad \Psi(z,t) \cdot \nu_s = \dashint_{\partial\Omega\times\{z\}} \grad \Psi(z,0) \cdot \nu_s =: j(z)
\end{equation}
is therefore a datum that must be prescribed along with an initial vorticity $f=\mathcal{L}(\Psi_0)$ and Neumann condition $g=\partial_\nu \Psi_0$. We emphasize however that we \emph{do not} prescribe the lateral boundary values $\Psi(z,t)$.  They instead arise as the boundary conditions naturally dual to \eqref{jofz} when solving an elliptic problem in an appropriate Hilbert space (see \cref{Hilbertspace}). In \cite{nv18}, we proved the existence of a global weak solution to this system.  

In this paper we prove the existence of a classical solution to this system for smooth enough initial data on a short time interval.

\begin{theorem}\label{shorttimeexistence}
Given $\lambda$, $f$, $g$, $j$ such that
\begin{enumerate}
    \item  $\lambda$ is $C^5$, there exists $\Lambda$ such that $\frac{1}{\Lambda} \leq \lambda \leq \Lambda$, and $\frac{\partial^k}{\partial z^k} \lambda|_{z=0,h} = 0$ for $k=1,2,3$
    \item $f\in H^4(\Omega\times(0,h))$ and $f$ vanishes in a neighborhood of $\partial\Omega\times\{0,h\}$
    \item $g\in \bar{H}^4(\Omega\times\{0,h\})$ and $g$ is compactly supported in $\Omega\times\{0,h\}$
    \item $j\in H^4(0,h)$ and $\frac{\partial^k}{\partial z^k} j(z) |_{z=0,h}= 0$ for $k=1,3$
\end{enumerate}
there exists a time $T_0$ which for large data satisfies
$$T_0 \gtrsim \left( \|f\|_{H^4} + \|g\|_{\bar{H}^4} + \|j\|_{H^4} \right)^{-1}$$
such that QG posed on the cylindrical domain $\Omega\times[0,h]$ has a classical solution on the time interval $[0,T_0]$.
\end{theorem}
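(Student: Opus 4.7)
My plan is to build a classical solution via a Picard iteration driven by the transport structure, closed by an $H^4$-level a priori estimate. Define iterates as follows: given $\Psi^n$, set $u^n := \grad^\perp \Psi^n$ and solve the two linearized transport equations
\[
\partial_t \omega^{n+1} + u^n\cdot\grad(\omega^{n+1} + \beta_0 y) = 0 \text{ in } \Omega\times(0,h), \qquad \omega^{n+1}|_{t=0} = f,
\]
\[
\partial_t g^{n+1} + u^n\cdot\grad g^{n+1} = 0 \text{ on } \Omega\times\{0,h\}, \qquad g^{n+1}|_{t=0} = g.
\]
Then recover $\Psi^{n+1}$ from the elliptic problem $\mathcal{L}(\Psi^{n+1}) = \omega^{n+1}$ with Neumann datum $g^{n+1}$, circulation datum $j$, and the lateral constraint that $\Psi^{n+1}$ is independent of $(x,y)$ on $\partial\Omega\times\{z\}$, solved in the Hilbert space of \cref{Hilbertspace}.

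Two structural points ensure the iteration makes sense at the level of regularity we carry. First, since $\Psi^n$ is independent of $(x,y)$ on the lateral boundary, $u^n$ is tangent to $\partial\Omega\times[0,h]$, so interior characteristics do not exit the cylinder. Second, $u^n$ has no vertical component, so the transport on $\Omega\times\{0,h\}$ decouples as a genuinely two-dimensional problem, and the compact support assumptions on $f, g$ propagate away from the edges $\partial\Omega\times\{0,h\}$ for short time. These facts together mean that the transported quantities $\omega^{n+1}$ and $g^{n+1}$ remain compactly supported away from the problematic edges throughout the iteration.

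The principal obstacle is proving the quantitative elliptic estimate
\[
\|\Psi^{n+1}\|_{H^6} \lesssim \|\omega^{n+1}\|_{H^4} + \|g^{n+1}\|_{\bar{H}^4} + \|j\|_{H^4}
\]
despite the mixed Neumann/lateral conditions, the non-local circulation constraint, and the corners at $\partial\Omega\times\{0,h\}$. I would use the hypotheses $\partial_z^k \lambda|_{z=0,h}=0$ for $k=1,2,3$, $\partial_z^k j|_{z=0,h}=0$ for $k=1,3$, together with the compact support of $f$ and $g$, as precisely the compatibility conditions that justify even reflection of the entire system across $z=0$ and $z=h$. After reflection the $z$-variable effectively lives on $\mathbb{T}$, the operator $\mathcal{L}$ has $C^5$ coefficients with no top or bottom boundary, and what remains is an elliptic problem on the smooth cylinder $\Omega\times\mathbb{T}$ with only the lateral non-local condition. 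The circulation constraint at each height then determines the unknown lateral trace $\Psi(z,t)$ as a continuous linear functional of the data, after which standard elliptic regularity theory on the smooth domain yields the bound.

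With this elliptic estimate in hand, standard commutator estimates for linear transport give
\[
\frac{d}{dt}\bigl(\|\omega^{n+1}\|_{H^4}^2 + \|g^{n+1}\|_{\bar{H}^4}^2\bigr) \lesssim \|u^n\|_{H^5}\bigl(\|\omega^{n+1}\|_{H^4}^2 + \|g^{n+1}\|_{\bar{H}^4}^2 + \beta_0^2\bigr),
\]
and since $\|u^n\|_{H^5} \lesssim \|\omega^n\|_{H^4} + \|g^n\|_{\bar{H}^4} + \|j\|_{H^4}$ by the elliptic step, a standard bootstrap closes the iteration on an interval $[0,T_0]$ with $T_0$ satisfying the claimed lower bound. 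Convergence of $\{\Psi^n\}$ to a classical solution, and uniqueness, then follow from a difference estimate carried out at the $L^2$ level, which is well-behaved because the velocities $u^n$ remain uniformly Lipschitz throughout $[0,T_0]$.
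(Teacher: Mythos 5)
Your high-level plan — linearized transport for the interior vorticity and boundary Neumann trace, elliptic recovery of the stream function, and a Picard/contraction argument with a Gr\"onwall-type a priori bound — is exactly the route the paper takes (there $\mathcal{S}$ is the solution map, and the fixed point is found in a ball of $L^\infty_t(\Hilbert\cap H^{5.5})$). The observation that the velocity is tangent to $\partial\Omega\times[0,h]$ and has no vertical component, so compact supports propagate away from the corner set $\partial\Omega\times\{0,h\}$, is also in the paper (Lemma~\ref{compactsupport}); note however that this holds for \emph{all} $t\in[0,T]$, not merely for short time, and the global-in-time propagation is what lets the compatibility conditions persist.

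The genuine gap is in the step you yourself flag as the principal obstacle. The proposal to ``even reflect the entire system across $z=0,h$'' so that $z$ lives on $\mathbb{T}$ does not work for two independent reasons. First, the Neumann datum $g$ is generically nonzero, but an even extension of $u$ across $z=0$ forces $\partial_z u_E|_{z=0}=0$, contradicting $\partial_\nu u = g$; you would first have to subtract off a lift of $g$, and that lift is not addressed. Second, the hypotheses give $f$ vanishing near the corner set $\partial\Omega\times\{0,h\}$ only, not $\partial_z^{2k-1} f|_{z=0,h}=0$ on the whole top and bottom; hence the even extension $f_E(z)=f(|z|)$ is generically only $H^1$ across $z=0,h$, not $H^4$, and elliptic regularity on $\Omega\times\mathbb{T}$ with such a source cannot deliver $H^6$. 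Relatedly, the claimed bound $\|\Psi^{n+1}\|_{H^6}\lesssim\cdots$ is too strong even if the reflection worked: the extended circulation datum $j_E$ satisfies $\partial_z^k j_E$ continuous only for $k\le 4$ (precisely because $\partial_z^k j|_{z=0,h}=0$ for $k=1,3$ and nothing is assumed on $k=2,4,5$), so $j_E$ is only $H^4$ on $(-h,h)$, and the achievable solution regularity is $H^{5.5}$, which is what the paper proves in Theorem~\ref{higherregularity}. The paper gets there by \emph{decomposing}: $u_1$ carries $(f,g)$ with zero lateral boundary values (handled by a change of variables $z\mapsto\theta(z)$ that turns $\mathcal{L}$ into $\Delta$, a domain extension, and a bootstrap using Lemma~\ref{effectofg}), and the difference $u-u_1$ carries only $j$ with zero interior source and zero Neumann data — it is this homogeneous piece that is safely reflected, differentiated in $z$, and bootstrapped through Theorem~\ref{superduperelliptic}. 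Without that decomposition and careful bookkeeping of which pieces reflect cleanly, the elliptic estimate — and hence the whole fixed-point argument — is not closed.
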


The compatibility conditions appear necessary for the construction of smooth solutions.  Essentially, we can only treat data $f$, $g$ which are zero in a neighborhood of the corners $\partial\Omega\times\{0,h\}$ and datum $j$ which retain smoothness after even reflection over the boundaries at $z=0,h$. While we can prove higher order elliptic regularity under weaker conditions on the data $f$, $g$, $j$, and $\lambda$ (see \cref{scompatibility}), it is not clear that these conditions are preserved by the evolution of the system (see \eqref{compatibilityinaction} on page 11).  However, an important consideration is that our conditions are still sufficiently general to treat a broad class of initial data for the special case of 2D SQG (see the next subsection for a discussion of the relation of our model to proposed models for 2D SQG on bounded domains).

\subsection{Previous Results}
Quasi-geostrophic flow is an asymptotic limit of 3D Navier-Stokes or Euler equations as the Rossby number $\epsilon \rightarrow 0$. Study of the inviscid three dimensional QG system was initiated in the absence of boundary conditions by Bourgeois and Beale \cite{bb}.  With the aid of viscosity at the boundary, Desjardins and Grenier built global-in-time weak solutions \cite{dg}.  Both of the afore-mentioned papers also include proofs that on the interval of time for which a smooth solution to the limiting system persists, the solutions to Navier-Stokes/Euler converge to the solution to QG.  In \cite{pv}, Puel and the second author introduced a reformulation of the inviscid 3D QG system in terms of $\nabla\Psi$ which allowed for the construction of weak solutions via compactness.  Using again the reformulation, the first author then extended this existence result to a wider class of initial data and determined the conditions under which the energy $\| \nabla \Psi(t) \|_{L^2}$ is conserved \cite{novackweak}.  In a recent work, the first author also addressed the case of energy-dissipative weak solutions via a convex integration argument \cite{novack19}.  Global regularity to the 3D model with dissipation was shown by the authors in \cite{novackvasseur}.

A special case of the three-dimensional model called the surface quasi-geostrophic equation arises by specifying that $\beta_0 =0$, $\lambda =1$, and $\Delta\Psi|_{t=0}=0$.  Then the stream function $\Psi$ remains harmonic for all times $t>0$, in which case the dynamics can be described completely by the evolution of $\partial_\nu \Psi$.  Since $\Psi$ is harmonic, one has that $\partial_\nu\Psi|_{z=0} = \half \Psi|_{z=0} =: \theta$ and $\grad^\perp\Psi|_{z=0} = \riesz^\perp \theta =: u$ where $\riesz$ is the vector of two dimensional Riesz transforms, and thus the equation can be written as 
$$  \partial_t \theta + u \cdot \grad \theta = 0.  $$
Study of 2D SQG began with the work of Constantin, Majda, and Tabak \cite{cmt}.  Weak solutions were constructed by Resnick \cite{Resnick}, with an extension of the theory by Marchand \cite{Marchand}.  The equivalences of the various notions of weak solutions to 2D SQG and 3D QG were shown by the first author in \cite{novackweak}.  In the presence of a viscous term $\half \theta$, global regularity of 2D SQG has been shown by Kiselev, Nazarov, and Volberg \cite{knv}, Caffarelli and the second author \cite{cv}, Constantin and Vicol \cite{cvicol}, and Kiselev and Nazarov \cite{Kiselev2010}.  

One way of approaching two-dimensional quasi-geostrophic dynamics on a smooth, bounded set $\Omega\subset \mathbb{R}^2$ is to specify a notion of Riesz transform in order to define the velocity $u=\riesz^\perp\theta$.  A natural choice is to define the half-Laplacian spectrally using the Dirichlet eigenfunctions, an approach initiated by Constantin and Ignatova in \cite{ci} and \cite{ci2}.  Further work by Constantin and Nguyen \cite{cn2}, \cite{cn}, Nguyen \cite{nguyen18}, and Constantin, Nguyen, and Ignatova \cite{cin} has explored further questions concerning this model.  However, it is not hard to see that solutions to inviscid SQG constructed using the spectral Riesz transform and extended harmonically to $z>0$ cannot coincide with the solutions to 3D QG we produce in this paper.  The difference lies in the lateral boundary conditions.  The use of the Dirichlet Laplacian $\lap_\Omega$ imposes that the extended stream function 
$$\Psi|_{\partial_\Omega\times[0,h]}=0.$$ 
However, the lateral boundary values of our stream function are not uniformly zero.  Furthermore, in the introduction of \cite{nv18}, we show that solutions constructed via the spectral Riesz transform do not satisfy \eqref{jofz} either.  Therefore, one of the main motivations of this work was to validate the physical relevance of the three-dimensional model and associated lateral boundary conditions derived in \cite{nv18}.

The outline for this paper is as follows.  In the next section, we first provide an intuition for the elliptic problem in the simple case that $\Omega$ is a ball. We then recall previous results and prove the higher regularity estimates needed to construct classical solutions.  In the third section, we construct classical solutions using a fixed-point argument.  The appendix contains a short justification for the use of a commutator estimate in our setting which is classical for $\mathbb{T}^n$ or $\mathbb{R}^n$.

\section{A Non-Standard Elliptic Problem}

\subsection{A Simple Case}

As described previously, building solutions to QG on a cylinder requires a choice of datum $j(z)$ which encodes the "average Neumann condition" as
\begin{equation}
\dashint_{\partial\Omega\times\{z\}} \grad \Psi(z,t) \cdot \nu_s =  j(z). \nonumber
\end{equation}
With this choice, reconstrucing $\Psi(t)$ at a given time can be done by solving an elliptic problem using $\Delta\Psi(t)$, $\partial_\nu\Psi(t)$, and $j(z)$ as data.  This elliptic problem takes the form
\[
(E) = \begin{dcases}
       \mathcal{L}(u) = f &  \Omega\times[0,h]\\
       \partial_{\nu} u = g & \Omega \times \{0,h\} \\
       u(x,y,z) = u(z) & \partial\Omega \times [0,h]\\
       \int_{\partial\Omega\times\{z\}}  \grad u\cdot \nu_s =j(z) & [0,h]. 
\end{dcases}
\]
As alluded to before, we cannot choose $u(z)$; rather, it arises as the condition naturally dual to the average Neumann datum $j(z)$. Let us suppose now that $\Omega$ is the unit ball so that we have access to rotational symmetries. Define $\tilde{u}$ to be the rotational average of $u$
$$ \tilde{u} = \int_{0}^{2\pi} u(r,\theta,z) \,d\theta,  $$
and set $u' = u - \tilde{u}$.  Assuming for the time being that we can solve the elliptic problem $(E)$, what is the equation satisfied by $\tilde{u}$?  Due to the fact that $\mathcal{L}$ commutes with rotations in $\theta$, we see that
$$  \mathcal{L}(\tilde{u}) = \int_0^{2\pi} f(r,\theta,z)\,d\theta  =: \tilde{f}.  $$
In addition, we will also have that $\partial_\nu \tilde{u} = \tilde{g}$, where $\tilde{g}$ is the rotational average of $g$.  Finally, as $j$ is invariant under rotations and $\partial_{\nu_s} \tilde{u}$ depends only on $z$, we find that $\tilde{u}$ solves the Neumann problem
\[
\begin{dcases}
       \mathcal{L}(\tilde{u}) = \tilde{f} &  \Omega\times[0,h]\\
       \partial_{\nu} \tilde{u} = \tilde{g} & \Omega \times \{0,h\} \\
       \tilde{u}(x,y,z) = u(z) & \partial\Omega \times [0,h]\\
       \grad \tilde{u}\cdot \nu_s = j(z) & \partial\Omega\times [0,h]. 
\end{dcases}
\]
By linearity, $u' = u - \tilde{u}$ then solves
\[
\begin{dcases}
       \mathcal{L}(u') = f-\tilde{f} &  \Omega\times[0,h]\\
       \partial_{\nu} u' = g-\tilde{g} & \Omega \times \{0,h\} \\
       u'(x,y,z) = 0 & \partial\Omega \times [0,h]. 
\end{dcases}
\]
That is, $\tilde{u}$ encodes the rotationally symmetric portion of $u$ and solves a Neumann problem, while $u'$ encodes the deviations from rotational symmetry and solves an elliptic problem with Neumann data on the top and bottom and Dirichlet data on the lateral boundaries. We remark that even in this simplified setting, it is evident that the data must satisfy some compatibility conditions at the corners in order for $u$ to be smooth.

\subsection{Previous Results and Definitions}

Following \cite{nv18}, we define the Hilbert space to which the solution $\Psi(t)$ will belong for each time $t$.

\begin{definition}\label{Hilbertspace}
Define $H$ by
$$ H := \left\{ \alpha \in C^\infty\left(\bar{\Omega}\times[0,h]\right): \quad \int_{\Omega\times[0,h]}\alpha \,dx\,dy\,dz=0, \quad  \alpha|_{\partial\Omega \times [0,h]}(x,y,z)=\alpha(z) \right\}.$$
Using the notation $\tilde{\nabla}=(\partial_x,\partial_y, \lambda(z) \partial_z)$, equip $H$ with the inner product
$$ \langle \alpha, \gamma \rangle_\Hilbert := \int_{\Omega\times[0,h]} \tilde{\nabla} \alpha \cdot {\nabla}{\gamma} \,dx\,dy\,dz. $$
Define the Hilbert space $\Hilbert$ as the closure of $H$ under the norm induced by this inner product.
\end{definition}

The construction of weak solutions requires a compatibility condition on the initial data due to the fact that test functions which are equal to a nontrivial constant throughout $\Omega\times(0,h)$ do not belong to $\Hilbert$. 

\begin{definition}[\textbf{Basic Compatibility}]\label{compatibility}
Any triple $(f,g,j)$ of functions with $f(x,y,z)\in L^2(\Omega\times[0,h])$, $g(x,y,z)\in L^2(\Omega\times\{0,h\})$, $j(z)\in L^2(0,h)$ is compatible if
$$\int_{\Omega\times[0,h]}f(x,y,z) \,dx\,dy\,dz = \int_0^h j(z)\,dz + \int_{\Omega\times\{0,h\}} \lambda(z)g(x,y,z) \,dx\,dy.$$
\end{definition}

For compatible data, we proved the following existence result.  

\begin{customlemma}{3.1}[\textbf{\cite{nv18}}]\label{variationalproblem}
For compatible data $(f,g,j)$, there exists a unique solution $u\in \Hilbert$ to the elliptic problem 
\[
(E) = \begin{dcases}
       \mathcal{L}(u) = f &  \Omega\times[0,h]\\
       \partial_{\nu} u = g & \Omega \times \{0,h\} \\
       u(x,y,z) = u(z) & \partial\Omega \times [0,h]\\
       \int_{\partial\Omega\times\{z\}}  \grad u\cdot \nu_s =j(z) & [0,h]. 
\end{dcases}
\]
satisfying the bound
$$  \| u \|_{\Hilbert} \leq C(\Omega,h,\lambda) \left( \|f\|_{L^2} + \|g\|_{L^2} + \|j\|_{L^2} \right).  $$
\end{customlemma}

Let $\{e_n\}_{n=1}^\infty$ and $\{\lambda_n\}_{n=1}^\infty$ be the sequence of eigenfunctions and corresponding eigenvalues for the operator $-\lap$ on $\Omega$ with homogenous Dirichlet boundary conditions; that is, 
\[
\begin{dcases}
       -\lap e_n = \lambda_n e_n &  (x,y)\in\Omega \\
       e_n = 0 & (x,y)\in\partial\Omega. \\
       \end{dcases}
\]
For $s\geq 0$, define
$$ \bar{H}^s(\Omega) = \{ g=\sum_n g_n e_n \in L^2(\Omega): \sum_n \left(\lambdan\right)^s g_n e_n \in L^2(\Omega) \}.$$
It is well known (consult section 2 of \cite{cn2} for example) that the domain of the homogenous Dirichlet Laplacian is $H^2(\Omega)\cap H^1_0(\Omega)$, and for such functions the $H^2(\Omega)$ and $\bar{H}^2(\Omega)$ norms are equivalent.  

We now define the higher-order compatibility conditions needed to prove higher regularity estimates. Data which satisfy \cref{compatibility} and \cref{scompatibility} will be called \emph{fully compatible}.

\begin{definition}[\textbf{Fully Compatible Data}]\label{scompatibility}
A triple of functions $(f,g,h)$ is  fully compatible if it is compatible (\cref{compatibility}) and satisfies in addition that
\begin{enumerate}
    \item $f\in H^{4}\left(\Omega\times(0,h)\right)$ and $\partial_z f$ vanishes on $\partial\Omega\times\{0,h\}$
    \item $g \in \bar{H}^{4}\left(\Omega\times\{0,h\}\right)$
    \item $j \in H^{4}(0,h)$ and for $k=1,3$ and $h$ the solution to
       \[
       \begin{dcases}
       \mathcal{L}(h) = f &  \Omega\times(0,h) \\
       \partial_\nu h = g & \Omega\times\{0,h\} \\
       h = 0 & \partial\Omega\times[0,h],
       \end{dcases}
       \]
    the equality
    $$ \frac{\partial^k}{\partial z^k} j \big{|}_{z=0,h} = \frac{\partial^k}{\partial z^k} \left( \int_{\Omega\times\{z\}} \grad h \cdot \nu_s \right)\bigg{|}_{z=0,h} $$
    holds.
\end{enumerate}
\end{definition}

We now recall Lemma 3.4 from \cite{nv18}.

\begin{customlemma}{3.4}{(\cite{nv18})}\label{effectofg}
Consider the equation
\[
\begin{dcases}
       \Delta u = 0 &  \Omega\times[0,h] \\
       \partial_{\nu} u = g & \Omega \times \{0,h\}\\
       u = 0 \qquad & \partial\Omega \times [0,h].\\
       \end{dcases}
\]
for $g\in\bar{H}^s(\Omega\times\{0,h\})$, $s\geq -\frac{1}{2}$. Then there exists a solution $u$ which satisfies
$$ \| \nabla u \|_{H^{s+\frac{1}{2}}(\Omega\times[0,h])}  \leq C(\Omega,h) \|g\|_{\bar{H}^s(\Omega\times\{0,h\})}  $$
\end{customlemma}

In \cite{nv18}, we proved the following elliptic regularity theorem (refer to Lemmas 3.4, 3.5, 3.6 from \cite{nv18} for the details).

\begin{customtheorem}{3.2}{(\cite{nv18})}\label{superduperelliptic}
Let $s\in[0,\frac{1}{2}]$, and let $f\in L^2(\Omega\times(0,h))$, $g\in H^s(\Omega\times\{0,h\})$, and $j\in H^s((0,h))$.  Let $u\in \Hilbert$ be the solution to $(E)$.  Then 
$$ \|\nabla u\|_{H^{s+\frac{1}{2}}(\Omega\times[0,h])} \leq C(\Omega,h,\lambda)\left( \|f\|_{L^2(\Omega\times[0,h])} + \|g\|_{H^s(\Omega\times\{0,h\})}+\|j\|_{H^s([0,h])} \right).$$ 
\end{customtheorem}

\subsection{Higher Regularity}

In order for the lateral boundary conditions to make sense, we assume the boundary of $\Omega$ has no discrete subcomponents. Higher (than $H^{5.5}$) regularity is likely available through a more careful analysis of higher order compatibility conditions.  However, the following result is satisfactory for building smooth solutions to QG and already somewhat delicate, and so we do not pursue any higher regularity here.

\begin{theorem}[\textbf{Higher Regularity}]\label{higherregularity}
Let $\Omega$ be a bounded, open set in $\mathbb{R}^2$ with a smooth ($C^\infty$, non-self-intersecting, no discrete subcomponents) boundary $\partial\Omega$.  Consider the elliptic problem
\begin{equation*}\begin{mcases}[ll@{\ }l]
       \mathcal{L}(u) = f &  \Omega\times[0,h]\\
       \partial_{\nu} u = g & \Omega \times \{0,h\}\\
       u(x,y,z) = u(z) & \partial\Omega \times [0,h]\\
       \int_{\partial\Omega\times\{z\}}  \grad u\cdot \nu_s =j(z) & [0,h]. 
\end{mcases}\end{equation*}
for a fully compatible triple of data $(f,g,j)$.  Then $u \in H^{5.5}\left(\Omega\times(0,h)\right) \cap \Hilbert$ and satisfies the bound
$$ \| u \|_{H^{5.5}} \leq C(\Omega,h,\lambda) \left( \| f \|_{H^{4}} + \| g \|_{\bar{H}^{4}} + \| j \|_{H^{4}} \right). $$
\end{theorem}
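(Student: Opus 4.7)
The plan is to bootstrap upward from the $H^2$-type bound of \cref{superduperelliptic} to $H^{5.5}$ by splitting $u$ into two pieces that each admit a separate elliptic regularity theory. I would set $u = w + v$, where $w$ is the solution of the mixed Dirichlet--Neumann problem
\begin{equation*}
\mathcal{L}(w) = f, \qquad \partial_\nu w = g \text{ on } \Omega\times\{0,h\}, \qquad w = 0 \text{ on } \partial\Omega\times[0,h]
\end{equation*}
(which is precisely the auxiliary function $h$ already appearing in \cref{scompatibility}), and $v := u-w$ solves the homogeneous equation $\mathcal{L}(v) = 0$ with zero Neumann trace on top and bottom, lateral Dirichlet value $v|_{\partial\Omega\times[0,h]}=U(z)$ for an unknown profile $U$, and residual flux
\begin{equation*}
\int_{\partial\Omega\times\{z\}}\grad v\cdot\nu_s \;=\; j(z) - \int_{\partial\Omega\times\{z\}}\grad w\cdot\nu_s \;=: \phi(z).
\end{equation*}
This decomposition isolates the ``lateral'' difficulty in $v$.

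Regularity of $w$ then follows from classical mixed-boundary elliptic theory: interior regularity and tangential bootstrapping give $w\in H^{5.5}$ provided $f\in H^4$ and $g\in \bar{H}^4$, as long as the Neumann and Dirichlet conditions match up at the corner $\partial\Omega\times\{0,h\}$. The hypotheses of \cref{shorttimeexistence} are tailored for exactly this: since $f$ vanishes in a neighborhood of $\partial\Omega\times\{0,h\}$ and $g$ is compactly supported inside $\Omega\times\{0,h\}$, the usual corner-singularity obstruction disappears, and a partition of unity combined with local flattening and reflection closes the estimate.

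For $v$, I would further write $v = U(z) + v_1$ with $v_1|_{\partial\Omega\times[0,h]} = 0$, so that $v_1$ solves the fully mixed problem
\begin{equation*}
\mathcal{L}(v_1) = -\partial_z(\lambda U'), \qquad \partial_\nu v_1 = -\partial_\nu U\text{ on }\Omega\times\{0,h\}, \qquad v_1|_{\partial\Omega\times[0,h]} = 0,
\end{equation*}
with data depending linearly on $U$. Setting $L[U] := \int_{\partial\Omega\times\{z\}}\grad v_1[U]\cdot\nu_s$, determining $v$ reduces to inverting $L[U] = \phi$; invertibility in the natural energy norm is essentially the content of \cref{variationalproblem}. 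To upgrade to higher regularity of $U$, I would reflect evenly across $z=0,h$: the compatibility condition $(3)$ of \cref{scompatibility} forces $\partial_z^k\phi|_{z=0,h} = 0$ for $k=1,3$, so that $\phi$ extends smoothly by even reflection, while the vanishing of $\partial_z^k\lambda$ at $z=0,h$ for $k=1,2,3$ ensures that the even-reflected $\lambda$ is itself $C^{5}$, so the one-dimensionally reduced $U$-equation carries smooth coefficients through the corners.

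The hardest step will be this last one: showing that $L^{-1}$ preserves enough $z$-regularity for $U$ that, combined with the mixed-problem theory for $v_1$, one recovers the full $H^{5.5}$ strength of $v$. Without the vanishing hypotheses on the odd $z$-derivatives of both $\lambda$ and $\phi$ at $z=0,h$, iterated $z$-differentiation of the equation would pick up nontrivial boundary contributions that block the bootstrap. The role of the fully-compatible hypotheses is precisely to kill those corner terms, after which one iterates \cref{superduperelliptic} a controlled number of times: tangential $(x,y)$ derivatives commute with $\mathcal{L}$ and inherit the original boundary structure, while $z$-differentiation (together with the reflection argument) remains well-posed at each order up to $H^{5.5}$.
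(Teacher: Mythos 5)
Your global decomposition $u = w + v$ is the same one the paper uses ($u_1$ and $u_4$ in their notation), and your handling of $w$ is close in spirit to Step~1 of the paper, though you invoke the compact-support assumptions of \cref{shorttimeexistence} (where $f$ vanishes near $\partial\Omega\times\{0,h\}$, $g$ compactly supported) rather than the weaker conditions of \cref{scompatibility} that \cref{higherregularity} is actually stated under; the paper only needs $\partial_z f|_{\partial\Omega\times\{0,h\}} = 0$ at one specific point of the bootstrap, and otherwise works through a $z$-change of variables that flattens $\mathcal{L}$ to $\Delta$ together with a ping-pong between a 3D extension argument (for the $f$-contribution) and \cref{effectofg} (for the $g$-contribution).

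The genuine gap is in your treatment of $v$. The further split $v = U(z) + v_1$ and the implicit equation $L[U]=\phi$, with $L$ the Dirichlet-profile-to-flux map, does not lead anywhere tractable: $L$ is a \emph{nonlocal} operator in $z$ (it hides the solution of a 3D elliptic problem in $(x,y,z)$), so there is no ``one-dimensionally reduced $U$-equation'' to which you can apply ODE-style reflection, and higher $z$-regularity of $L^{-1}$ is not something that can be read off from \cref{variationalproblem}, which gives only the energy-level bound. The paper sidesteps this entirely: instead of extracting $U$, it reflects $\eta u_4$ evenly across $z=0$ (and $(1-\eta)u_4$ across $z=h$), computes the commutator $[\mathcal{L},\partial_z]$ acting on the reflected function --- which is where the vanishing of $\lambda', \lambda''', $ at $z=0,h$ enters, killing the $\delta$-type singularities --- and then applies \cref{superduperelliptic} to the equation satisfied by $\partial_z(\eta u_{4,E})$. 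Iterating this $z$-differentiation plus reflection argument, together with a separate argument that horizontal second derivatives near the lateral boundary can be recovered from $\partial_{zz}$, $\partial_{\tau\tau}$, and the equation, climbs one integer at a time up to $H^5$, with a final half-derivative giving $H^{5.5}$ when $j_E$ runs out of Sobolev regularity. Your last paragraph gestures at exactly this (iterating \cref{superduperelliptic} after reflection), so the correct core idea is there; the point is that it must be applied directly to the 3D function $u_4$, and the detour through $U$ and $L^{-1}$ should be dropped. Also a small inaccuracy: even reflection of $\lambda$ with $\lambda',\lambda''' = 0$ at the endpoints gives $\lambda(|z|)$ with well-defined derivatives only up to order $4$, not $C^5$, which is one reason the final exponent caps at $5.5$ rather than higher.
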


\begin{proof}
Throughout the proof, we use the notation $C(\Omega,h,\lambda)$ to describe constants that depend only on $\Omega$, $h$, $\lambda$ and may change from line to line. The proof is broken into two steps, which proceed as follows.  In Step 1, we isolate the effect of $f$ and $g$ while imposing homogenous Dirichlet conditions on $\partial\Omega\times[0,h]$.  The regularity for Step 1 proceeds via a combination of a change of variables in $z$ and bootstrapping.  By using classical elliptic regularity for $f$ and \cref{effectofg} for $g$, we obtain $H^{5.5}$ regularity. We note that we require the compatibility condition on $f$ in this first step. Then in Step 2, we analyze the effect of $j$ by reflecting over the boundaries at $z=0,h$ and utilizing \cref{superduperelliptic} and the compatibility condition between $j$ and $f$ and $g$.

\begin{enumerate}[label=Step \arabic*:]
\item Let $u_1$ be the solution to 
\[
\begin{dcases}
  \mathcal{L}(u_1) = f & \Omega\times(0,h) \\
  \partial_\nu u_1 = g & \Omega\times\{0,h\} \\
  u_1 = 0 & \partial\Omega\times[0,h].
\end{dcases}
\]
We can construct $u_1$ variationally in the subspace of $\Hilbert$ consisting of functions which vanish on $\partial\Omega\times[0,h]$.  Then $u_1$ satisfies the bound 
$$  \| u_1 \|_{\Hilbert} \leq C(\Omega,h,\lambda) \left( \| f \|_{L^2} + \| g \|_{L^2} \right). $$
Now consider 
$$\tilde{u}_1 := u_1\left(x,y,\theta(z) \right) $$
for $\theta$ solving
\[
\begin{dcases}
  \theta'(z) = \sqrt{\lambda(\theta(z))} \\      \theta(0) = 0 .
\end{dcases}
\]
By the strict positivity and smoothness of $\lambda$, $\theta:[0,h]\rightarrow [0,\tilde{h}]$ is well-defined, smooth, and a bijection for $\tilde{h}=\theta(h)$. Then we can calculate
\begin{align*}
    \Delta\left( u_1\left(x,y,\theta(z)\right)\right) &= \lap \left( u_1\left(x,y,\theta(z)\right) \right) + (\partial_{zz}u_1)\left(x,y,\theta(z)\right)(\theta'(z))^2 + (\partial_z u_1)\left(x,y,\theta(z)\right)\theta''(z)\\
    &= \lap \left( u_1\left(x,y,\theta(z)\right) \right) + (\partial_{zz}u_1)\left(x,y,\theta(z)\right)\lambda(\theta(z)) + \lambda'(\theta(z))(\partial_z u_1)\left(x,y,\theta(z)\right)\\
    &\qquad - \lambda'(\theta(z))(\partial_z u_1)\left(x,y,\theta(z)\right) + (\partial_z u_1)\left(x,y,\theta(z)\right)\theta''(z)\\
    &= f\left(x,y,\theta(z)\right)- \lambda'(\theta(z))(\partial_z u_1)\left(x,y,\theta(z)\right) + (\partial_z u_1)\left(x,y,\theta(z)\right)\theta''(z)\\
    &:= \tilde{f}(z).
\end{align*}
Notice that although $\tilde{f}$ only belongs to $L^2$ for now, $\partial_\nu \tilde{f}$ is well-defined pointwise on $\partial\Omega\times\{0,\tilde{h}\}$ and vanishes by the assumption on $f$ and the fact that $u_1 \equiv 0$ on $\partial\Omega\times[0,\tilde{h}]$. Letting $\tilde{g} = g \sqrt{\lambda(\theta(z))}$, we have shown that $\tilde{u}_1$ solves
\[
\begin{dcases}
  \Delta \tilde{u}_1 = \tilde{f} & \Omega\times(0,\tilde{h}) \\
  \partial_\nu \tilde{u}_1 = \tilde{g} & \Omega\times\{0,\tilde{h}\} \\
  \tilde{u}_1 = 0 & \partial\Omega\times[0,\tilde{h}]
\end{dcases}
\]
for $f \in L^2(\Omega\times(0,\tilde{h}))$ and $\tilde{g}\in \bar{H}^4(\Omega\times\{0,\tilde{h}\})$. 

Let $\Omega_E$ be an open, bounded set in $\mathbb{R}^3$ with smooth boundary such that $\Omega\times(0,\tilde{h})\subset\Omega_E$, and $\partial\Omega\times[0,\tilde{h}]\subset\partial\Omega_E$.  Let $f_E$ be an $L^2$ Sobolev extension of $\tilde{f}$ to $\mathbb{R}^3$ restricted to $\Omega_E$. Then consider the elliptic problem
\begin{equation*}\begin{mcases}[ll@{\ }l]
       \Delta u_2 = f_E &  \Omega_E \\
       u_2 = 0 & \partial\Omega_E. 
\end{mcases}\end{equation*}
Classical elliptic regularity theory yields that $u_2 \in H^{2}\left( \Omega_E \right)$. Note as well that since $u_2$ vanishes on $\partial\Omega_E$, $\frac{\partial^k}{\partial z^k} u_2 \equiv 0$ for any $k$ on $\partial\Omega\times\{0,h\}$.  Therefore, we can set $u_3 = \tilde{u}_1 - u_2 $ to be the solution to
\[
\begin{dcases}
  \Delta u_3 = 0 & \Omega\times(0,\tilde{h}) \\
  \partial_\nu u_3 = \tilde{g} -\partial_\nu u_2 & \Omega\times\{0,\tilde{h}\} \\
  u_3 = 0 & \partial\Omega\times[0,\tilde{h}]
\end{dcases}
\]
where $\tilde{g}-\partial_\nu u_2 \in L^2(\Omega\times\{0,\tilde{h}\})$. Then by \cref{effectofg}, $u_3 = \tilde{u}_1 - u_2 \in {H}^{1.5}(\Omega\times(0,\tilde{h}))$. Bootstrapping this estimate, we find that $\tilde{f}\in H^\frac{1}{2}(\Omega\times(0,\tilde{h}))$, and therefore $u_2 \in H^{2.5}(\Omega\times(0,\tilde{h}))$ by the same extension and restriction argument as before. Then $\partial_\nu u_2 \in \bar{H}^1(\Omega\times\{0,\tilde{h}\})$, and so $u_3 \in H^{2.5}(\Omega\times(0,\tilde{h}))$ from \cref{effectofg}. Bootstrapping again gives that $u_2 \in H^{3.5}(\Omega\times(0,\tilde{h}))$.  By the trace, $\partial_\nu u_2 \in H^2(\Omega\times\{0,\tilde{h}\})$. But since $\partial_\nu u_2$ vanishes at $\partial\Omega\times\{0,\tilde{h}\}$, $\partial_\nu u_2 \in \bar{H}^2(\Omega\times\{0,\tilde{h}\})$, and applying \cref{effectofg} again gives that $u_3 \in H^{3.5}(\Omega\times\{0,\tilde{h}\})$. Bootstrapping yet again yields $u_2 \in H^{4.5}(\Omega\times\{0,\tilde{h}\})$, and by the trace, $\partial_\nu u_2 \in H^3(\Omega\times\{0,\tilde{h}\}) \cap \bar{H}^2(\Omega\times\{0,\tilde{h}\})$.  We now claim that $\partial_\nu u_2 \in \bar{H}^{3}(\Omega\times\{0,h\})$.  For this to be true, we must show that $\lap (\partial_\nu u_2) \in \bar{H}^{1}(\Omega\times\{0,h\})$. For this we write
\begin{align*}
    \lap(\partial_\nu u_2) &= (\Delta - \partial_{zz}) \partial_\nu u_2\\
    &= \Delta (\partial_\nu u_2)\\
    &= \partial_\nu \tilde{f} \in H_0^1(\Omega\times\{0,h\}) \\
\end{align*}
by the earlier remark that $\partial_\nu \tilde{f}$ vanishes at $\partial\Omega\times\{0,\tilde{h}\}$. Therefore, we can apply \cref{effectofg} to deduce that $u_3 \in H^{4.5}(\Omega\times\{0,h\})$.  Then bootstrapping a final time with $u_2$ gives that $u_2 \in H^{5.5}(\Omega\times(0,\tilde{h}))$.  Thus we find that $\partial_\nu u_2 \in \bar{H}^4(\Omega\times\{0,\tilde{h}\})$, and we obtain that $u_3 \in H^{5.5}(\Omega\times(0,\tilde{h}))$.  

\item Now we analyze the effect of $j$. Define
$$\tilde{j}(z) = j(z) - \int_{\partial\Omega\times\{z\}}\grad u_1(z) \cdot \nu_s $$
and set $u_4 = u - u_1$.  Then $u_4$ solves
\begin{equation*}\begin{mcases}[ll@{\ }l]
       \mathcal{L} (u_4) = 0 &  \Omega\times(0,h) \\
       \partial_\nu u_4 = 0 & \Omega\times\{0,h\}\\
       u_4 = u_4(z) & \partial\Omega \times [0,h]\\
       \dashint_{\partial\Omega\times\{z\}}{\grad u_4 \cdot \nu_s} = \tilde{j}(z) & [0,h].
\end{mcases}\end{equation*}
To show higher regularity estimates on $u_4$, we will reflect over the boundaries at $z=0,h$. Let $\eta(z):[0,h)\rightarrow [0,1]$ be a smooth, compactly supported function of $z$ such that $\eta\equiv 1$ for $z\in[0,\frac{3h}{4}]$. Let $u_{4,E}:\Omega\times(-h,h)\rightarrow \mathbb{R}$ be the reflection of $\eta u_4$ over the boundary $z=0$, and let $\tilde{j}_E(z):(-h,h)\rightarrow \mathbb{R}$ be the reflection of $\tilde{j}$ over $z=0$. By the assumptions of \cref{scompatibility}, for $k=1,3$,
\begin{align*}
    \frac{\partial^k}{\partial z^k}\tilde{j}(z)|_{z=0,h} = 0.
\end{align*}
Therefore $j_E$ retains $L^2$ integrability up to derivatives of order $4$.  Here is the only point that we require the higher-order compatibility conditions on $j$.

Let us extend the operator $\mathcal{L}$ by even reflection of $\lambda(z)$ to $\lambda(|z|)$. By the assumption that $\lambda'=\lambda'''=0$ at $z=0$, we have that $\lambda(|z|)$ has well-defined derivatives up to order 4 on $[-h,h]$. One finds immediately that $\mathcal{L}(u_{4,E})=0$ for all $(x,y,z)\in\Omega\times(-h,h)$.  We now calculate $\mathcal{L}(\partial_z u_{4,E})$ by writing
\begin{align}\label{subcriticalestimate}
    \mathcal{L} ( \partial_z ( u_{4,E} ))&(x,y,z) = \left[ \mathcal{L}, \partial_z \right]( u_{4,E})(x,y,z) \nonumber \\
    & = -\lambda'(|z|)\frac{z}{|z|}(\partial_{zz}u_4(x,y,|z|) - \frac{z}{|z|} \lambda''(|z|)(\partial_{z} u_4)(x,y,|z|) \nonumber \\
    & := f_j (x,y,z).
\end{align}
We have that $f_j$ is well-defined since $\lambda'$ vanishes at 0 and $\partial_z u_4$ vanishes at 0, and
$$ \left\| f_j \right\|_{L^2(\Omega\times(-h,h))} \leq C(\Omega,h,\eta) \| u_4 \|_{H^2(\Omega\times(0,h))}. $$ 
Note that $u_4\in H^2(\Omega\times(0,h))$ by \cref{superduperelliptic}, and so this estimate makes sense. Continuing the analysis, $\partial_z (\eta u_{4,E})$ then satisfies the equation
\[ 
\begin{dcases}
       \mathcal{L} (\partial_z(\eta u_{4,E})) = f_j &  \Omega\times[-h,h] \\
       \partial_{z}\left(\partial_z(\eta u_{4,E})\right) =0 & \Omega\times\{-h,h\}\\
       \partial_z (\eta u_{4,E}) = \partial_z (\eta u_{4,E})(z) &  \partial\Omega\times[-h,h]\\
       \dashint_{\partial\Omega\times\{z\}} \grad (\partial_z (\eta u_{4,E})) \cdot \nu_s = \partial_z(\eta \tilde{j}_E) & [-h,h].
       \end{dcases}
\]
Applying \cref{superduperelliptic}, we obtain that $\nabla(\partial_{z}(\eta u_{4,E})) \in H^1(\Omega\times(-h,h))$ and satisfies the bound
$$  \left\| \nabla (\partial_z ( \eta u_{4,E} ))\right\|_{H^1(\Omega\times(-h,h))} \leq C(\Omega,h,\eta) \left( \| u_4 \|_{H^2} + \| j \|_{H^{1.5}} \right) . $$
Repeating the argument, but this time with a reflection of $(1-\eta)u_4$ over $z=h$, shows that
$$  \| \nabla ( \partial_z u_4 ) \|_{H^1(\Omega\times(0,h))} \leq C(\Omega,h,\eta) \left( \| u_4 \|_{H^2} + \| j \|_{H^{1.5}} \right).  $$

We must show that $\grad^2 u_4 \in H^1(\Omega\times(0,h))$ as well. Letting $\tau$ denote the tangent vector to $\partial\Omega$, we have that $\partial_{\tau\tau}u_4|_{\partial\Omega\times[0,h]}=0$, and therefore we can differentiate in the $\tau$ direction near $\partial\Omega$.  Then we have that
$$  \partial_{\tau^\perp \tau^\perp} u_4 = \mathcal{L}(u_4) - \lambda\partial_{zz} u_4 - \lambda' \partial_z u_4 - \partial_{\tau\tau} u_4. $$
Therefore we can differentiate in the $\tau^\perp$ direction near the lateral boundaries as well.  Thus for any $(x,y,z)$ near $\partial\Omega\times[0,h]$, we have found a basis of directions $(\tau, \tau^\perp, z)$ such that $\partial_{zz}u_3$, $\partial_{\tau\tau}u_3$, and $\partial_{\tau^\perp \tau^\perp}u_3$ all belong to $H^1(\Omega\times(0,h))$, and therefore $\grad^2 u_4 \in H^1(\Omega\times(0,h))$.

We now outline how to obtain higher regularity ($H^s$ for $3<s\leq 5.5$) inductively. The estimate \eqref{subcriticalestimate} yielded $H^{3}$ regularity contingent on the finiteness of the $H^2$ norm of $u_3$. Differentiating this equality again in $z$ and arguing as before gives a finite $H^{4}$ norm of $u_3$. We remark that as in the equality \eqref{subcriticalestimate}, the vanishing of $\lambda'$, $\lambda''$, and $\lambda'''$ eliminates singularities or Dirac deltas at $z=0,h$ which arise when calculating $\mathcal{L}(\frac{\partial^k}{\partial z_k}u_{4,E})$.  Applying the same reasoning another time, we reach $H^{5}$.  For the final half-derivative, $j_E$ runs out of differentiability at order 4, and so we reach $H^{5.5}$.

\end{enumerate}
\end{proof}

\section{Construction of a Smooth Solution}

We begin this section with a technical lemma which will be used to show that under the assumptions on $f$ and $g$ in the statement of \cref{shorttimeexistence}, $\mathcal{L}(\Psi)(t)$ and $\partial_\nu \Psi(t)$ vanish in a neighborhood of $\partial\Omega\times\{0,h\}$ for all $t$.

\begin{lemma}\label{compactsupport}
Let $u:\Omega\times[0,T]\rightarrow \mathbb{R}^2$ be a divergence-free vector field belonging to $L^\infty_t\left(C^1(\bar{\Omega})\right)$ such that $u(x) \cdot \nu_s = 0$ for $x\in\partial\Omega$.  Let $\Gamma(x,t)$ be the solution to the 
\[
\begin{dcases}
  \Dot{\Gamma}(x,t) = u\left( \Gamma(x,t), t \right) \\
  \Gamma(x,t_0) = x
\end{dcases}
\]
for $x\in\bar{\Omega}$ and $t_0\in[0,T]$. Let $\tilde{\Omega}\subset\subset \Omega$.  Then $ \left\{ \Gamma\left(x,t\right) : x\in\tilde{\Omega} \right\} \subset \subset \Omega$ for all $t\in[0,T]$.
\end{lemma}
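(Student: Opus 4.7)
The plan is to establish flow-invariance of $\partial\Omega$ and of $\Omega$ separately, and then conclude with a compactness argument based on continuous dependence on initial data. Since $u(\cdot,t) \in C^1(\bar{\Omega})$ with norm uniform in $t$, the non-autonomous Cauchy--Lipschitz theorem guarantees unique global solvability of the ODE on $\bar{\Omega} \times [0,T]$ and joint continuity of $(x,t) \mapsto \Gamma(x,t)$.

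First I would show that $\partial\Omega$ is flow-invariant. Because $\partial\Omega$ is a smooth embedded submanifold of $\bar{\Omega}$ and $u(\cdot,t)$ is tangent to it, the restriction $u|_{\partial\Omega}$ is a Lipschitz-in-space, measurable-in-time vector field on the compact manifold $\partial\Omega$. Thus the intrinsic ODE $\dot\gamma(t) = u(\gamma(t),t)$ with $\gamma(t_0) \in \partial\Omega$ admits a (locally and hence globally) unique solution lying entirely in $\partial\Omega$. This solution also satisfies the ambient ODE, so by uniqueness in $\bar{\Omega}$ it must agree with $\Gamma(\cdot,t)$. Hence any trajectory starting on $\partial\Omega$ remains there.

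Next I would argue by contradiction that trajectories starting in $\Omega$ stay in $\Omega$. If some $x \in \Omega$ satisfied $\Gamma(x,t_1) = y \in \partial\Omega$ for a $t_1 \in [0,T]$, then the unique trajectory through $(y,t_1)$ would stay on $\partial\Omega$ by the previous step, but uniqueness also forces it to coincide with $\Gamma(x,\cdot)$; evaluating at $t_0$ yields $x \in \partial\Omega$, a contradiction.

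To conclude: given $\tilde\Omega \subset\subset \Omega$, the set $\overline{\tilde\Omega} \times [0,T]$ is compact, so its continuous image $K := \Gamma\!\left(\overline{\tilde\Omega} \times [0,T]\right)$ is compact; the previous step gives $K \subset \Omega$, hence $K \subset\subset \Omega$, which is in fact stronger than the stated conclusion (it is uniform in $t$). The only genuinely delicate point is the flow-invariance of $\partial\Omega$; this becomes essentially tautological once one observes that the tangency condition $u \cdot \nu_s = 0$ permits solving the ODE intrinsically on the submanifold $\partial\Omega$, after which ambient uniqueness does the work. If one preferred to avoid manifold language, an equivalent route is to use the signed distance $d$ to $\partial\Omega$ near the boundary: the tangency $u \cdot \nabla d = 0$ on $\partial\Omega$ combined with $u \in C^1$ yields $|\tfrac{d}{dt} d(\Gamma(x,t))| \leq C\,|d(\Gamma(x,t))|$, and Gr\"onwall prevents interior trajectories from reaching $\partial\Omega$.
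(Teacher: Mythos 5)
Your proof takes essentially the same route as the paper's: establish flow-invariance of $\partial\Omega$, deduce flow-invariance of the interior, and then conclude by compactness. The differences are only in degree of rigor and packaging. Where the paper simply asserts that trajectories starting on $\partial\Omega$ remain there because $u|_{\partial\Omega}$ is tangent, you supply two genuine proofs of this fact --- solving the ODE intrinsically on the compact submanifold $\partial\Omega$ and invoking ambient uniqueness, or using the signed distance $d$ together with the estimate $|\tfrac{d}{dt} d(\Gamma(x,t))| \le C\,|d(\Gamma(x,t))|$ and Gr\"onwall --- which is a worthwhile tightening. (Strictly, the Gr\"onwall route needs a word justifying that the Lipschitz bound on $u\cdot\nabla d$ near $\partial\Omega$ follows from $u\in C^1(\bar\Omega)$ and the smoothness of $\partial\Omega$, but this is routine.) In the final step, the paper fixes $t$ and minimizes the continuous, positive function $d(\cdot,t)$ over the compact set $\overline{\tilde\Omega}$; you instead take the continuous image of the compact set $\overline{\tilde\Omega}\times[0,T]$, which yields a bound uniform in $t$ and is slightly cleaner, though the two are interchangeable here. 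Both proofs are correct.
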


\begin{proof}

First, by the regularity assumption on $u$ and the vanishing of the normal component of $u$ on $\partial\Omega$, $\Gamma$ is well-defined as the solution to the ODE.  Note that if $x\in\partial\Omega$, $\Gamma(x,t)$ remains in $\partial\Omega$ forwards and backwards in time from $t_0$ since $u|_{\partial\Omega}$ is tangent to $\partial\Omega$. Conversely, it then holds that any point in the interior of $\Omega$ at time $t_0$ remains so under the flow of $\Gamma$.  Consider the function $d(x,t):\tilde{\Omega}\times[0,T] \rightarrow [0,\infty)$ which gives the distance from $\Gamma(x,t)$ to $\partial\Omega$ for $x\in\tilde{\Omega}$.  By the continuity in $x$ of $\Gamma$, for a fixed $t\in[0,T]$, $d(\cdot,t)$ is a continuous function on $\tilde{\Omega}$.  However, we know that $d(\cdot,t)>0$ since $\Gamma$ maps the interior of $\Omega$ to itself.  Since the domain $\tilde{\Omega}$ of $d(\cdot,t)$ is compact, the image of $\tilde{\Omega}$ under $d(\cdot,t)$ is compact in $(0,\infty)$ and therefore has a minimum value which must be strictly larger than $0$.  Therefore, for $x\in\tilde{\Omega}$ the distance from $\Gamma(x,t)$ to $\partial\Omega$ is strictly bounded away from zero, and thus $\tilde{\Omega}$ remains compactly supported in $\Omega$ for all $t\in[0,T]$.

\end{proof}

Throughout the remainder of this section, the notation $C(\Omega)$ indicates a constant which depends on $\Omega$ but may change from line to line (similarly for $h$, $\beta_0$, $\lambda$, etc.).  Constants whose values remain fixed from line to line will be noted.  We aim to build a smooth solution on a short time interval to the system
\[ 
\begin{dcases}
       \left( \partial_t + \grad^\perp\Psi\cdot\grad \right) \left( \mathcal{L}(\Psi) + \beta_0 y \right) =0 &  \Omega\times(0,h)\times[0,T] \\
       \left( \partial_t + \grad^\perp\Psi\cdot\grad \right) \partial_\nu \Psi =0 &  \Omega\times\{0,h\}\times[0,T]\\
       \Psi(x,y,z,t) = \Psi(z,t) & \partial\Omega\times[0,h]\times[0,T] \\
       \dashint_{\partial\Omega\times\{z\}} \grad \Psi(z) \cdot \nu_s = j(z) & [0,h]\times[0,T]
       \end{dcases}
\]
Consider the set of functions 
$$ \mathbb{X}= \left\{ \Psi \in L^\infty\left( [0,T]; \Hilbert \cap H^{5.5} \right) \right\} $$
for $T$ to be chosen later.  For $\Psi \in \mathbb{X}$, we define a solution operator $\mathcal{S}$, which maps $\Psi$ to the solution of the linearized version of the system with velocity field $\grad^\perp\Psi$.  Specifically, let $F=F(\Psi)$ and $G=G(\Psi)$ solve
\[ 
\begin{dcases}
       \left( \partial_t + \grad^\perp\Psi\cdot\grad \right) \left( F + \beta_0 y \right) =0 &  \Omega\times(0,h)\times[0,T] \\
       \left( \partial_t + \grad^\perp\Psi\cdot\grad \right) G =0 &  \Omega\times\{0,h\}\times[0,T]\\
       F|_{t=0} = f \\
       G|_{t=0} = g ,
       \end{dcases}
\]
and for $t\in[0,T]$, define $\mathcal{S}(\Psi)(t)$ as the solution to the elliptic problem
\[ 
\begin{dcases}
       \mathcal{L}\left( \mathcal{S}(\Psi)(t) \right) = F(t) &  \Omega\times(0,h) \\
       \partial_{\nu}\left( \mathcal{S}(\Psi)(t) \right) = G(t) & \Omega\times\{0,h\}\\
       \mathcal{S}(\Psi)(x,y,z,t) = \mathcal{S}(\Psi)(z,t) &  \partial\Omega\times[0,h]\\
       \dashint_{\partial\Omega\times\{z\}} \grad \left( \mathcal{S}(\Psi)(t,z) \right) \cdot \nu_s = j(z) & [0,h].
       \end{dcases}
\]

\begin{enumerate}[label=Claim \arabic*:]
\item $\mathcal{S}: \mathbb{X} \rightarrow \mathbb{X} $ is a well-defined mapping.
\begin{proof}
We first note that by the incompressiblity of the flow, the quantities
$$  \int_{\Omega\times(0,h)} F(t), \qquad \int_{\Omega\times\{0,h\}} G(t)  $$
are preserved in time.  Therefore the compatibility condition from \cref{compatibility} is satisfied for all time so that $\mathcal{S}(\Psi)(t)$ is well-defined as the solution to the elliptic problem. We now show that $\mathcal{S}(\Psi)(t)\in H^{5.5}(\Omega\times(0,h))$ for all time $t$. 

Since $F$ solves 
$$ \left(\partial_t+\grad^\perp\Psi\cdot\grad\right)F = -\beta_0\partial_x \Psi, $$
we apply $D^\alpha$ to the equation for $|\alpha| = 4$, multiply by $D^\alpha F$, and integrate by parts to obtain
\begin{align*}
    \frac{1}{2}\frac{\partial}{\partial t} \int_{\Omega\times(0,h)} &\left| D^\alpha F \right|^2 = -\int_{\Omega\times(0,h)} D^\alpha \left( \grad^\perp\Psi \cdot \grad F \right) D^\alpha F - \beta_0 \int_{\Omega\times(0,h)} \partial_x (D^\alpha \Psi) D^\alpha F \\
    &= - \int_{\Omega\times(0,h)} \left[ D^\alpha, \grad^\perp\Psi \cdot \right] (\grad F) D^\alpha F - \beta_0 \int_{\Omega\times(0,h)} \partial_x (D^\alpha \Psi) D^\alpha F\\
    &\leq C\left(\Omega, h, \beta_0\right) \left\| D^\alpha F \right\|_{L^2} \\
    & \qquad \qquad \times \left( \left\| \grad^\perp\Psi\right\|_{C^1} \left\|\nabla^{|\alpha|-1} \grad F \right\|_{L^2} + \| F \|_{L^\infty} \left\| \nabla^{|\alpha|} \grad^\perp \Psi \right\|_{L^2} + \left\| D^\alpha \partial_x \Psi \right\|_{L^2} \right).
\end{align*}
Summing over $\alpha$ and using Sobolev embedding to control $\grad^\perp\Psi$, we obtain that
$$  \frac{\partial}{\partial t} \| F \|_{H^4} \leq C\left( \Omega,h,\beta_0 \right) \left( \left\| \Psi \right\|_{H^5} \| F \|_{H^4} +  \left\| \Psi \right\|_{H^5} \right). $$
Applying Gr\"{o}nwall's inequality gives that for $t\in[0,T]$,
$$ \| F(t) \|_{H^4} \leq C\left( \Omega,h,\beta_0 \right) e^{\int_0^T \left\| \Psi(\tau)\right\|_{H^5}\,d\tau}  \left( \|f\|_{H^4} + \int_0^T \| \Psi(\tau) \|_{H^5}\,d\tau \right). $$

An entirely analogous argument for $G$ yields
\begin{align*}
\| G(t) \|_{{H}^4(\Omega\times\{0,h\})} \leq C\left( \Omega \right) \| g \|_{{H}^4(\Omega\times\{0,h\})} e^{\int_0^T \left\| \grad^\perp\Psi(s) \right\|_{{H}^4(\Omega\times\{0,h\})}\,ds}. 
\end{align*}

Before applying \cref{higherregularity}, we must verify the compatibility conditions from \cref{scompatibility}. Applying \cref{compactsupport}, we deduce that if the support of $f(\cdot,\cdot,z)\subset\subset\Omega$ for fixed $z$, then the support of $F(\cdot,\cdot,z,t)\subset\subset\Omega$.  By the assumption on $f$ in \cref{shorttimeexistence}, for $z$ close enough to $0$ or $h$, the support of $f(\cdot,\cdot,z)\subset\subset \Omega$.  Therefore, $\supp F(\cdot,\cdot,z,t)\subset\subset\Omega$, and thus $\supp F$ remains at positive distance from $\partial\Omega\times\{0,h\}$ for all time. Then $\partial_z F|_{\partial\Omega\times\{0,h\}}=0$, and we have now shown the first condition from \cref{scompatibility}.

To show the second condition, we must show that we can replace the $H^4(\Omega\times\{0,h\})$ norms in the differential equality for $G(t)$ with $\bar{H}^4(\Omega\times\{0,h\})$.  Since $g$ is compactly supported in $\Omega\times\{0,h\}$ by the assumptions of \cref{shorttimeexistence}, applying \cref{compactsupport} shows that $G(t)$ is compactly supported in $\Omega\times\{0,h\}$ for all time.  Therefore,
$$  \left\| G(t) \right\|^2_{\bar{H}^4(\Omega\times\{0,h\})} = \int_{\Omega\times\{0,h\}} \left| \lap^2 G(t) \right|^2 \leq \| G(t) \|^2_{H^4(\Omega\times\{0,h\})}.  $$
Next, we have that due to the continuous inclusion of the domain of $(-\lap)^\alpha$ into the classical Sobolev space $H^\alpha(\Omega)$ for $\alpha\geq0$ (consult \cite{cn2} for example), replacing $\|g\|_{H^4}$ with $\|g\|_{\bar{H}^4}$ on the right hand side can be done immediately without any assumptions on $g$, and we have shown the second condition from \cref{scompatibility}.  

To verify the third compatiblity condition, after appealing to the assumptions on $j$ in \cref{shorttimeexistence}, it suffices to show that 
$$  \frac{\partial^k}{\partial z^k} \int_{\partial\Omega\times\{0,h\}} \grad u_1 \cdot \nu_s = 0  $$
for $k=1,3$ and $u_1$ the solution to
\[
\begin{dcases}
  \mathcal{L}(u_1) = f & \Omega\times(0,h) \\
  \partial_\nu u_1 = g & \Omega\times\{0,h\} \\
  u_1 = 0 & \partial\Omega\times[0,h].
\end{dcases}
\]
For $k=1$, we use the compact support of $G(t)$ to notice that $\partial_z \grad u_1 = 0$ in a neighborhood (in $x$ and $y$) of $\partial\Omega\times\{0,h\}$.  For $k=3$, first note that by the assumption on $\lambda$ in \cref{shorttimeexistence}, 
$$  \frac{\partial}{\partial z}\left( \mathcal{L} -\lap \right)u_1 |_{z=0,h} = \lambda \frac{\partial^3}{\partial z^3} u_1 |_{z=0,h}.  $$
Therefore, we can write that
\begin{align}\label{compatibilityinaction}
\lambda \frac{\partial^3}{\partial z^3} \int_{\partial\Omega\times\{0,h\}} \grad u_1 \cdot \nu_s &= \frac{\partial}{\partial z} \int_{\partial\Omega\times\{0,h\}} \left( \mathcal{L} -\lap \right) \grad u_1 \cdot \nu_s\nonumber \\
& = \int_{\partial\Omega\times\{0,h\}} \left( \frac{\partial}{\partial z} \grad F  - \grad \lap G \right) \cdot \nu_s\nonumber\\
&=0
\end{align}
by the fact that $F$ and $G$ vanish near $\partial\Omega\times\{0,h\}$.  Thus we have verified the third compatibility condition from \cref{scompatibility}.  We remark that this step of the argument is the one of the main reasons that we impose the conditions on $f$, $g$, $j$, and $\lambda$ in the statement of \cref{shorttimeexistence}.

Now we can apply \cref{higherregularity} to give that $\mathcal{S}$ is a self-map of $\mathbb{X}$, and
\begin{align}\label{gronwall}
\left\| \mathcal{S}(\Psi)(t) \right\|_{H^{5.5}} &\leq \tilde{C}(\Omega,h,\lambda,\beta_0) e^{\int_0^T \left\| \Psi(s) \right\|_{H^{5.5}}\,ds} \nonumber\\
&\qquad \left( \|f\|_{H^4} + \|g\|_{\bar{H}^4} + \|j\|_{H^4} + \int_0^T \left\| \Psi(s) \right\|_{H^{5.5}}\,ds \right),
\end{align}
showing that $\mathcal{S}$ maps $\mathbb{X}$ into itself (for any $T$).
\end{proof}

\item There exists a choice of $T_1$ and a set $B \subset \mathbb{X}$ such that $\mathcal{S}:B\rightarrow B$.
\begin{proof}
Define 
$$R=4 \tilde{C}(\Omega,h,\lambda,\beta_0)\left( \| f \|_{H^4} + \| g \|_{\bar{H}^4} + \| j \|_{H^4} \right)$$
where $\tilde{C}(\Omega,h,\lambda,\beta_0)$ is the constant from \eqref{gronwall}, and $B \subset \mathbb{X}$ by
$$ B = \left\{ \Psi \in \mathbb{X} : \| \Psi \|_{\mathbb{X}} \leq R \right\}. $$
We have that $\mathcal{S}(\Psi)|_{t=0}$ is independent of $\Psi$, and 
$$ \| \mathcal{S}(\Psi)(0) \|_{H^{5.5}} \leq \tilde{C}(\Omega,h,\lambda,\beta_0) \left( \| f \|_{H^4} + \| g \|_{\bar{H}^4} + \| j \|_{H^4} \right)= \frac{1}{4}R < R. $$
For $0<T$ and $\Psi \in B$, \eqref{gronwall} shows that
\begin{align*}
   \left\| \mathcal{S}(\Psi)(t) \right\|_{H^{5.5}} &\leq \tilde{C}(\Omega,h,\lambda,\beta_0) e^{\int_0^T \left\| \Psi(s) \right\|_{H^{5.5}}\,ds} \left( \|f\|_{H^4} + \|g\|_{\bar{H}^4} + \|j\|_{H^4} + \int_0^T \left\| \Psi(s) \right\|_{H^{5.5}}\,ds \right)\\
   &\leq \tilde{C}(\Omega,h,\lambda,\beta_0) e^{\int_0^T R\,ds} \left( \|f\|_{H^4} + \|g\|_{\bar{H}^4} + \|j\|_{H^4} + \int_0^T R\,ds \right).
\end{align*}
Since this bound varies continuously in $T$ and is strictly less than $R$ at $T=1$, we can find $T_1>0$ such that for all $t\in[0,T_1]$ and $\Psi \in B$, 
$$ \| \mathcal{S}(\Psi)(t) \|_{H^{5.5}} \leq \frac{1}{3}R < R. $$
To check the size of $T_1$ for large data, we must control the exponential term $e^{\int_0^T R\,ds}$, which given the choice of $R$ remains comparable to 1 for 
$$  T_1 \approx \left( \|f\|_{H^4} + \|g\|_{H^4} + \|j\|_{H^4} \right)^{-1}.  $$
\end{proof}

\item There exists $T_0$ such that $\mathcal{S}$ has a fixed point in $B$.
\begin{proof}
Define $\Psi^{(0)}(t,x,y,z) = \Psi_0(x,y,z)$, and define the sequence of functions 
$$ \Psi^{(n)} = \mathcal{S}(\Psi^{(n-1)}) $$
inductively. Since $\mathcal{S}$ is a self-map of $B$, $\Psi^{(n)}$ is well-defined for all $n\in\mathbb{N}$. We claim that for a suitable choice of $T_0$, $\nabla \Psi^{(n)}$ is a Cauchy sequence in the space 
$$L^\infty\left([0,T_0];H^\frac{1}{2}(\Omega\times(0,h))\right).$$
Let integers $n$ and $k$ be fixed.  Then $\mathcal{L}(\Psi^{(n+k)}) - \mathcal{L}(\Psi^{(n)})$ satisfies the equation
\begin{align*}
\partial_t \left( \mathcal{L}(\Psi^{(n+k)}) - \mathcal{L}(\Psi^{(n)}) \right) &+ \grad^\perp\Psi^{(n+k-1)} \cdot \grad \left( \mathcal{L}(\Psi^{(n+k)}) - \mathcal{L}(\Psi^{(n)}) \right)\\
&\qquad = \left( \grad^\perp\Psi^{(n-1)} - \grad^\perp\Psi^{(n+k-1)} \right) \cdot \grad \left(\mathcal{L}(\Psi^{(n)}) + \beta_0 y \right).
\end{align*}
Multiplying by $\mathcal{L}(\Psi^{(n+k)}) - \mathcal{L}(\Psi^{(n)})$, integrating by parts, and using Gronw\"{a}ll's inequality again shows that for $t\in[0,T_0]$,
\begin{align*}
    &\left\| \mathcal{L}(\Psi^{(n+k)})(t) - \mathcal{L}(\Psi^{(n)})(t) \right\|_{L^2(\Omega\times(0,h))}\\
    &\qquad\leq \int_0^{T_0} \left\| \left( \grad^\perp\Psi^{(n-1)}(\tau) - \grad^\perp\Psi^{(n+k-1)}(\tau) \right) \cdot \grad (\mathcal{L}\Psi^{(n)}(\tau)+\beta_0 y) \right\|_{L^2(\Omega\times(0,h))} \,d\tau\\
    &\qquad < T_0 \bigg{(} \left\| \left(\nabla\Psi^{(n-1)}-\nabla\Psi^{(n+k-1)}\right)\right\|_{L^\infty([0,T_0];L^2(\Omega\times(0,h)))} \\
    &\qquad\qquad\qquad\qquad \times \left\|\grad\left(\mathcal{L}\Psi^{(n)}+\beta_0 y\right) \right\|_{L^\infty([0,T_0];L^\infty(\Omega\times(0,h)))} \bigg{)}\\
    &\qquad \leq T_0 (R+\beta_0) \left\| \left(\nabla\Psi^{(n-1)}-\nabla\Psi^{(n+k-1)}\right)\right\|_{L^\infty([0,T_0];L^2(\Omega\times(0,h)))}.
\end{align*}
A completely analogous argument holds for $\partial_\nu \Psi^{(n)}$. Solving the elliptic problem and summing then shows that 
\begin{align*}
    &\left\| \left(\nabla\Psi^{(n)}-\nabla\Psi^{(n+k)}\right)\right\|_{L^\infty([0,T_0];H^\frac{1}{2}(\Omega\times(0,h)))} \\
    &\qquad \leq 2 T_0 (R+\beta_0) C(\Omega,h,\lambda) \left\| \left(\nabla\Psi^{(n-1)}-\nabla\Psi^{(n+k-1)}\right)\right\|_{L^\infty([0,T_0];H^\frac{1}{2}(\Omega\times(0,h)))}\\
    &\leq \frac{1}{2} \left\| \left(\nabla\Psi^{(n-1)}-\nabla\Psi^{(n+k-1)}\right)\right\|_{L^\infty([0,T_0];H^\frac{1}{2}(\Omega\times(0,h)))}
\end{align*}
if $T_0$ is chosen to absorb the constant $2(R+\beta_0)C(\Omega,h,\lambda)$. Iteratively applying this bound then shows that $\nabla\Psi^{(n)}$ is a Cauchy sequence as desired.  In addition, the choice of $R$ gives the desired lower bound on $T_0$ for large data.  

Since $\Psi^{(n)}$ converges strongly to $\Psi$ in $L^\infty\left([0,T_0];H^{\frac{1}{2}}(\Omega\times(0,h))\right)$ and is uniformly bounded in $L^\infty\left([0,T_0];H^{5.5}(\Omega\times(0,h))\right)$, interpolation gives that $\Psi^{(n)}$ converges strongly in $L^\infty\left([0,T_0];H^{s}(\Omega\times(0,h))\right)$ for $s<5.5$. Then define $\Psi(t)$ to be the solution to the elliptic problem
\[ 
\begin{dcases}
       \mathcal{L}\left( \Psi(t) \right) = \lim_{n\rightarrow\infty} \mathcal{L}\left( \Psi^{(n)}(t) \right) &  \Omega\times(0,h) \\
       \partial_{\nu}\left( \Psi(t) \right) = \lim_{n\rightarrow\infty} \partial_{\nu}\left( \Psi^{(n)}(t) \right) & \Omega\times\{0,h\}\\
       \Psi(x,y,z,t) = \Psi(z,t) &  \partial\Omega\times[0,h]\\
       \dashint_{\partial\Omega\times\{z\}} \grad \left( \Psi(t,z) \right) \cdot \nu_s = j(z) & [0,h].
       \end{dcases}
\]
Passing to the limit in the QG equations, we have therefore shown that $\Psi$ is a fixed point of $\mathcal{S}$.

\end{proof}

\end{enumerate}

\section{Appendix}

\begin{prop}[\textbf{Commutator Estimate}]\label{commutatorestimate}
For $f,g: \Omega\times(0,h) \rightarrow \mathbb{R}$, there exist constants $C(\Omega,h,s)$ such that for $\alpha$ a multi-index with $|\alpha|=s$, 
$$  \| D^\alpha(fg) - f D^\alpha(g) \|_{L^2\left(\Omega\times(0,h)\right)} \leq C(\Omega,h,s) \left( \| \nabla f \|_{L^\infty} \|\nabla^{(s-1)}g\|_{L^2} + \|g\|_{L^\infty}\|\nabla^s f\|_{L^2} \right).  $$
\end{prop}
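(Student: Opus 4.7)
This is the classical Moser commutator estimate, whose proof on $\mathbb{R}^n$ proceeds by Leibniz expansion followed by Gagliardo--Nirenberg interpolation. The goal is to verify that the argument carries over to the bounded Lipschitz domain $\Omega\times(0,h)$, either directly (since GN holds on bounded Lipschitz domains) or after Sobolev extension to $\mathbb{R}^3$.

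Expand via Leibniz,
$$D^\alpha(fg)-f\,D^\alpha g=\sum_{0<\beta\le\alpha}\binom{\alpha}{\beta}D^\beta f\,D^{\alpha-\beta}g,$$
and for each $|\beta|=k$ with $1\le k\le s$, bound the corresponding summand in $L^2(\Omega\times(0,h))$ by H\"older's inequality with exponents $p_k=2(s-1)/(k-1)$ and $q_k=2(s-1)/(s-k)$, which satisfy $1/p_k+1/q_k=1/2$. The endpoints $k=1$ and $k=s$ are interpreted as $L^\infty$ and give the desired bound directly, so it remains to treat $2\le k\le s-1$.

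For the intermediate cases, apply the Gagliardo--Nirenberg inequality on $\Omega\times(0,h)$ to $\nabla f$ and to $g$ separately with interpolation parameter $\theta=(k-1)/(s-1)$, obtaining
\begin{align*}
\|D^k f\|_{L^{p_k}}&\le C\,\|\nabla f\|_{L^\infty}^{(s-k)/(s-1)}\|\nabla^s f\|_{L^2}^{(k-1)/(s-1)},\\
\|D^{s-k}g\|_{L^{q_k}}&\le C\,\|g\|_{L^\infty}^{(k-1)/(s-1)}\|\nabla^{s-1}g\|_{L^2}^{(s-k)/(s-1)}.
\end{align*}
Multiplying and applying Young's inequality $a^\theta b^{1-\theta}\le a+b$ with $a=\|\nabla f\|_{L^\infty}\|\nabla^{s-1}g\|_{L^2}$ and $b=\|g\|_{L^\infty}\|\nabla^s f\|_{L^2}$ controls the summand by $C(a+b)$. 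Summing over $\beta$ yields the claim.

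The main technical obstacle is justifying the Gagliardo--Nirenberg inequality on the non-smooth domain $\Omega\times(0,h)$, whose boundary has Lipschitz edges along $\partial\Omega\times\{0,h\}$. This may be handled either by direct proofs of GN on Lipschitz domains, or more cleanly by composing with Stein's universal extension operator $E\colon W^{k,p}(\Omega\times(0,h))\to W^{k,p}(\mathbb{R}^3)$, which is bounded simultaneously on all the Sobolev and $L^\infty$-type spaces appearing in the argument, and whose existence follows from the Lipschitz character of the boundary. Once either route is established, the argument above transfers verbatim from $\mathbb{R}^3$ to $\Omega\times(0,h)$.
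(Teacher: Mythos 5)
Your proposal is correct and takes essentially the same approach as the paper: both identify the statement as the classical Klainerman--Majda commutator estimate, whose proof is Leibniz expansion plus H\"older plus Gagliardo--Nirenberg interpolation, and both reduce the matter to establishing GN on $\Omega\times(0,h)$ via Stein's extension operator on the Lipschitz domain. The paper simply cites \cite{km81} for the interpolation argument rather than writing it out as you have done.
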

\begin{proof}
Substituting $\Omega\times(0,h)$ for $\mathbb{T}^n$, the statement is precisely the Klainerman-Majda commutator estimate from \cite{km81}.  The ingredients of the proof in that case are H\"{o}lder's inequality and the Gagliardo-Nirenberg interpolation inequality.  As H\"{o}lder's inequality is valid for $\Omega\times(0,h)$, we can follow the classical proof provided that the Gagliardo-Nirenberg inequality holds for $\Omega\times(0,h)$.  Since $\Omega\times(0,h)$ is a bounded domain Lipschitz domain, Stein's linear Sobolev extension operator $\mathcal{E}$ \cite{ste70} gives that for $k\in\mathbb{N}$ and $1\leq p \leq \infty$,
$$ \mathcal{E}: W^{k,p}\left( \Omega\times(0,h) \right) \rightarrow W^{k,p} \left(  \mathbb{R}^3 \right) $$
is bounded with constants depending only on $k$, $p$, $\Omega$, and $h$. Utilizing the extension, it is simple to show that Gagliardo-Nirenberg holds for $\Omega\times(0,h)$, completing the proof.
\end{proof}

\bibliography{references.bib}

\begin{thebibliography}{10}

\bibitem{a}
J.-P. Aubin.
\newblock Un th\'eor\`eme de compacit\'e.
\newblock {\em C. R. Acad. Sci. Paris}, 256:5042--5044, 1963.

\bibitem{bb}
A.J. Bourgeois and J.T. Beale.
\newblock Validity of the quasigeostrophic model for large-scale flow in the
  atmosphere and ocean.
\newblock {\em SIAM Journal on Mathematical Analysis}, 25(4):1023--1068, 1994.

\bibitem{bsv16}
T.~{Buckmaster}, S.~{Shkoller}, and V.~{Vicol}.
\newblock {Nonuniqueness of weak solutions to the SQG equation}.
\newblock {\em ArXiv e-prints}, October 2016.

\bibitem{cv}
L.~Caffarelli and A.~Vasseur.
\newblock Drift diffusion equations with fractional diffusion and the
  quasi-geostrophic equation.
\newblock {\em Ann. Math.}, 171(3):1903--1930, Apr 2010.

\bibitem{cmt}
P.~Constantin, A.J. Majda, and E.~Tabak.
\newblock Formation of strong fronts in the 2-d quasigeostrophic thermal active
  scalar.
\newblock {\em Nonlinearity}, 7(6):1495--1533, Nov 1994.

\bibitem{cvicol}
P.~Constantin and V.~Vicol.
\newblock Nonlinear maximum principles for dissipative linear nonlocal
  operators and applications.
\newblock {\em Geometric and Functional Analysis}, 22(5):1289--1321, 2012.

\bibitem{ci2}
Peter Constantin and Mihaela Ignatova.
\newblock Critical {SQG} in bounded domains.
\newblock {\em Annals of PDE}, 2(2):8, Nov 2016.

\bibitem{ci}
Peter Constantin and Mihaela Ignatova.
\newblock Remarks on the fractional laplacian with dirichlet boundary
  conditions and applications.
\newblock {\em International Mathematics Research Notices}, 2017(6):1653--1673,
  2017.

\bibitem{cin}
Peter Constantin, Mihaela Ignatova, and Huy~Q. Nguyen.
\newblock Inviscid limit for {SQG} in bounded domains.
\newblock {\em SIAM J. Math. Anal.}, 50(6):6196--6207, 2018.

\bibitem{cn}
Peter Constantin and Huy~Quang Nguyen.
\newblock Global weak solutions for sqg in bounded domains.
\newblock {\em Communications on Pure and Applied Mathematics}, 2017.

\bibitem{cn2}
Peter Constantin and Huy~Quang Nguyen.
\newblock Local and global strong solutions for {SQG} in bounded domains.
\newblock {\em Physica D: Nonlinear Phenomena}, sep 2017.

\bibitem{dg}
B.~Desjardins and E.~Grenier.
\newblock Derivation of quasi-geostrophic potential vorticity equations.
\newblock {\em Adv. Differential Equations}, 3(5):715--752, 1998.

\bibitem{Kiselev2010}
A.~Kiselev and F.~Nazarov.
\newblock Variation on a theme of caffarelli and vasseur.
\newblock {\em Journal of Mathematical Sciences}, 166(1):31--39, mar 2010.

\bibitem{knv}
A.~Kiselev, F.~Nazarov, and A.~Volberg.
\newblock Global well-posedness for the critical 2d dissipative
  quasi-geostrophic equation.
\newblock {\em Inventiones mathematicae}, 167(3):445--453, 2007.

\bibitem{km81}
Sergiu Klainerman and Andrew Majda.
\newblock Singular limits of quasilinear hyperbolic systems with large
  parameters and the incompressible limit of compressible fluids.
\newblock {\em Comm. Pure Appl. Math.}, 34(4):481--524, 1981.

\bibitem{Marchand}
F.~Marchand.
\newblock Existence and regularity of weak solutions to the quasi-geostrophic
  equations in the spaces {$L^p$} or {$\dot{H}^{-\frac{1}{2}}$}.
\newblock {\em Communications in Mathematical Physics}, 277(1):45--67, Jan
  2008.

\bibitem{nguyen18}
Huy~Quang Nguyen.
\newblock Global weak solutions for generalized {SQG} in bounded domains.
\newblock {\em Anal. PDE}, 11(4):1029--1047, 2018.

\bibitem{novackweak}
M.~D. {Novack}.
\newblock {On the Weak Solutions to the 3D Inviscid Quasi-Geostrophic System}.
\newblock {\em ArXiv e-prints}, September 2017.

\bibitem{novack19}
Matthew {Novack}.
\newblock {Non-uniqueness of Weak Solutions to the 3D Quasi-Geostrophic
  Equations}.
\newblock {\em arXiv e-prints}, page arXiv:1812.08734, Dec 2018.

\bibitem{nv18}
Matthew {Novack} and Alexis {Vasseur}.
\newblock {The Inviscid 3D Quasi-Geostrophic System on Bounded Domains}.
\newblock {\em arXiv e-prints}, page arXiv:1806.00547, Jun 2018.

\bibitem{novackvasseur}
Matthew~D. Novack and Alexis~F. Vasseur.
\newblock Global in time classical solutions to the 3d quasi-geostrophic system
  for large initial data.
\newblock {\em Communications in Mathematical Physics}, 358(1):237--267, nov
  2017.

\bibitem{pv}
M.~Puel and A.~Vasseur.
\newblock Global weak solutions to the inviscid 3{D} quasi-geostrophic
  equation.
\newblock {\em Communications in Mathematical Physics}, 339(3):1063--1082,
  2015.

\bibitem{Resnick}
S.~Resnick.
\newblock {\em Dynamical problems in non-linear advective partial differential
  equations}.
\newblock PhD thesis, University of Chicago, 1995.

\bibitem{ste70}
Elias~M. Stein.
\newblock {\em Singular integrals and differentiability properties of
  functions}.
\newblock Princeton Mathematical Series, No. 30. Princeton University Press,
  Princeton, N.J., 1970.

\end{thebibliography}
\bibliographystyle{plain}
\nocite{cv}
\nocite{bsv16}
\nocite{cvicol}
\nocite{knv}
\nocite{pv}
\nocite{cmt}
\nocite{Marchand}
\nocite{Resnick}
\nocite{dg}
\nocite{bb}
\nocite{novackvasseur}
\nocite{a}
\nocite{novackweak}
\nocite{ci}
\nocite{ci2}
\nocite{cn}
\nocite{cn2}
\end{document}